\documentclass{article}
\usepackage{graphicx} 
\usepackage{hyperref, amsmath, amsthm, amssymb, comment,enumerate}
\usepackage[dvipsnames]{xcolor}

\usepackage[normalem]{ulem}
\usepackage{soul}

\newtheorem{thm}{Theorem}
\newtheorem{lem}[thm]{Lemma}
\newtheorem{cor}[thm]{Corollary}
\newtheorem*{rem*}{Remark}

\newcommand{\marie}[1]{{\color{orange} #1}}

\title{A uniform proof approach for congruences modulo 3 for partitions with $k$-colored odd parts}

\author{
  Marie Jameson \\
  Mathematics Department\\  University of Tennessee\\ 
Knoxville, TN 37996\\  USA \\
  \texttt{mjameso2@utk.edu}
  \bigskip
  \and
  James A. Sellers \\
  Mathematics and Statistics Department\\  University of Minnesota Duluth\\  Duluth, MN 55812\\  USA \\
  \texttt{jsellers@d.umn.edu}
}

\date{}

\begin{document}

\maketitle
\begin{abstract}
In recent work, Hirschhorn and the second author defined $a_k(n)$ to be the number of partitions of $n$ wherein the even parts come in only one color, while the odd parts may be ``colored'' with one of $k$ colors for fixed $k\geq 1$. This function generalizes the classical partition function and has been of significant interest because it satisfies a number of congruences. Although prior work studying $a_k(n)$ has resulted in congruences in arithmetic progressions in a somewhat ad hoc manner, this work gives a uniform framework for studying congruences modulo 3. This allows us to prove an infinite family of infinite families of non-nested congruences modulo 3.
\end{abstract}

\section{Introduction and statement of results} \label{section:intro}

In recent work, Hirschhorn and the second author \cite{HirschhornSellers2025} defined $a_k(n)$ to be the number of partitions of
$n$ wherein the even parts come in only one color, while the odd parts may be ``colored'' with one of $k$ colors for fixed $k\geq 1$. The function $a_k(n)$ has generating function
\begin{equation}
\label{ak_genfn}
\sum_{n=0}^\infty a_k(n)q^n = \prod_{n=1}^\infty\frac{(1-q^{2n})^{k-1}}{(1-q^n)^k}.
\end{equation}
Note that $a_1(n) = p(n)$ is the number of integer partitions of $n$ and $a_2(n) = \overline{p}(n)$ is the number of overpartitions of $n.$

In the flurry of work that has followed, Hirschhorn, Fathima, Thejitha, Wilson, and the second author have provided a number of congruence results for $a_k(n).$ See \cite{HirschhornSellers2025, HirschhornSellers2005, Sellers2025, ThejithaFathima2025, Wilson2026} for details. For example, we have the following infinite family of divisibility properties mod 3 \cite{ThejithaFathima2025, Sellers2025}: for all $m\geq 0$ and $n\geq 0$
\begin{equation} \label{eqn:a5}
a_5\left(3^{2m+3}n+\frac{153\cdot 3^{2m}-1}{8}\right)\equiv 0\pmod{3}.
\end{equation}
All of the proofs of these known families follow by induction on $m$ after establishing one initial congruence (to serve as the base case) and an internal congruence; for example, in the case of $a_5(n)$ we use
\begin{align*}
a_5(27n+19) &\equiv 0 \pmod{3}\\
a_5(27n+10) &\equiv a_5(3n+1) \pmod{3}
\end{align*}
to obtain the infinite family above.

Thus, we hope to establish more infinite families of divisibility properties mod 3 by finding and proving congruences of the form
\begin{equation}
\label{jas_internal_result}
    a_{k}(27n+A) \equiv C\cdot a_{k}(3n+B) \pmod{3}
\end{equation}
where $A,B,$ and $C$ depend on $k$ and $C\in \{0,1,-1\}$. This has been done for some small values of $k$ in the literature cited above; in all such instances, there is a nonnegative integer $\alpha$ such that $k=3\alpha+2$ and $A\equiv \alpha \pmod{9}$.  

For congruences of the form of \eqref{jas_internal_result} with $C=0,$ that is, for true divisibility properties, the second author was able to prove the following general result which provides an infinite family of Ramanujan--like congruences modulo 3 satisfied by this family of functions. This proves all known congruences of this form with $C=0.$

\begin{thm}{\cite[Corollary 4.4]{Sellers2025}} \label{thm:sellersdivprop}
Let $\alpha\geq 0$ and write $\alpha = 9a+b$ where $0\leq b<9.$ For all $n\geq 0$, we have
\[a_{3\alpha+2}(27n + (18 + b)) \equiv 0\pmod{3}.\]
\end{thm}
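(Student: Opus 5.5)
The plan is to reduce modulo $3$ to a finite family of cases, and then settle each case by explicit $3$-dissections. Throughout, write $f_j=\prod_{n\ge1}(1-q^{jn})$.

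\emph{Reduction modulo $3$.} With $k=3\alpha+2$, the generating function \eqref{ak_genfn} is $f_2^{3\alpha+1}/f_1^{3\alpha+2}$. Since $f_j^3\equiv f_{3j}\pmod 3$, this gives
\[
\sum_{n\ge0}a_{3\alpha+2}(n)q^n\equiv \left(\frac{f_6}{f_3}\right)^{\alpha}\frac{f_2}{f_1^2}
=\left(\frac{f_6}{f_3}\right)^{\alpha}\frac{f_1f_2}{f_1^3}
\equiv \left(\frac{f_6}{f_3}\right)^{\alpha}\frac{f_1f_2}{f_3}\pmod 3 .
\]
Now write $\alpha=9a+b$. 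Then $(f_6/f_3)^{9a}\equiv (f_{54}/f_{27})^{a}\pmod 3$, which is a power series in $q^{27}$. Multiplying by a series in $q^{27}$ does not change residues of exponents modulo $27$. It therefore suffices to prove, for each $0\le b\le 8$, that the coefficient of $q^{27n+18+b}$ in
\[
F_b(q):=\left(\frac{f_6}{f_3}\right)^{b}\frac{f_1f_2}{f_3}
\]
is divisible by $3$. This is the step that explains why the progression depends only on $b$.

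\emph{First dissection.} For this I would use the classical $3$-dissection
\[
f_1f_2=\frac{f_6f_9^4}{f_3f_{18}^2}-qf_9f_{18}-2q^2\frac{f_3f_{18}^4}{f_6f_9^2}.
\]
The prefactor $(f_6/f_3)^b/f_3$ is a series in $q^3$. Hence the terms of $F_b$ with exponent $\equiv b\pmod 3$ come from exactly one piece of this dissection, according to $b\bmod 3$. Extracting that piece, dividing by $q^{b\bmod 3}$ and replacing $q^3\mapsto q$ turns the problem into showing that the coefficients of $q^{9n+c_b}$ in an explicit eta-quotient $G_b(q)$ vanish mod $3$, for a suitable $c_b$. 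I would again simplify $G_b$ by $f_j^3\equiv f_{3j}$, pulling out factors in $q^3$ wherever possible. Two further rounds of this process, using the $3$-dissections of the remaining non-lacunary pieces, would finish each case. The pieces involved are typically $f_1f_2$, $f_2/f_1^2$, or quotients such as $f_1^2/f_2$ and $f_2^2/f_1$, whose $3$-dissections are standard and can be taken from Hirschhorn's toolkit. Equivalently, after two rounds one only needs the coefficient of $q^{3n+2}$ (or the appropriate residue) of a single quotient to vanish mod $3$.

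\emph{Main obstacle and fallback.} The main difficulty is bookkeeping: nine values of $b$, each needing up to three dissection steps, and at the final step one must land on a quotient whose relevant component is visibly $\equiv 0\pmod 3$. I would first try to make this uniform by noting that $F_{b+3}\equiv F_b\cdot (f_{18}/f_9)$. The factor $f_{18}/f_9$ is a series in $q^9$, so it shifts the problem for $b$ to the problem for $b+3$ in a controlled way. This should cut the work to the cases $b=0,1,2$, with the residue $18+b$ tracking the extra shift. If the dissection route becomes unwieldy in some case, I would instead treat each of the finitely many statements as the vanishing mod $3$ of the $U_{27}$-image of a holomorphic modular form of known level and weight. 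That can be settled by checking coefficients up to the Sturm bound.
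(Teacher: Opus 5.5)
Your route is correct in outline and genuinely different from the paper's, but as written it stops short of the step that carries the content, and your proposed shortcut fails.

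\textbf{What is sound.} The reduction $f_2^{3\alpha+1}/f_1^{3\alpha+2}\equiv (f_6/f_3)^{\alpha}f_1f_2/f_3 \pmod 3$ is valid. So is the observation that $(f_6/f_3)^{9a}\equiv (f_{54}/f_{27})^{a}$ is a series in $q^{27}$, which cleanly explains why only $b$ matters. The quoted $3$-dissection of $f_1f_2$ is also correct.

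\textbf{What is missing.} ``Two further rounds would finish each case'' is an expectation, not an argument: you never exhibit a final quotient whose relevant component vanishes. It does close, and uniformly. Write $f_1f_2=A(q^3)-qB(q^3)-2q^2C(q^3)$ with $A=\frac{f_2}{f_1}\cdot\frac{f_3^4}{f_6^2}$, $B=f_3f_6$, $C=\frac{f_1}{f_2}\cdot\frac{f_6^4}{f_3^2}$, and put $b=3c+r$ with $0\le c,r\le 2$.

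Extracting the $r$-piece of $F_b$ and setting $q^3\mapsto q$ gives $\frac{f_2^{3c+1}}{f_1^{3c+2}}E_r(q^3)$, where $E_0(q)=\frac{f_1^4}{f_2^2}$, $E_1(q)=-f_1f_2$ and $E_2(q)=-2\frac{f_2^4}{f_1^2}$. You need its coefficients at $q^{9n+6+c}$. Modulo $3$ this series is $\frac{f_6^c}{f_3^{c+1}}E_r(q^3)\,f_1f_2$.

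The $c$-piece of $f_1f_2$ is $q^c\bigl(\frac{f_6}{f_3}\bigr)^{1-c}E_c(q^9)$, and $\frac{f_6^c}{f_3^{c+1}}\bigl(\frac{f_6}{f_3}\bigr)^{1-c}=\frac{f_6}{f_3^2}$ is independent of $c$. So after $q^3\mapsto q$ you need the $q^{3n+2}$-coefficients of $E_c(q^3)\cdot\frac{f_2}{f_1^2}E_r(q)$. The factor $\frac{f_2}{f_1^2}E_r(q)$ is:
\begin{itemize}
\item $\varphi(-q)=f_1^2/f_2$ for $r=0$;
\item $-\psi(q)=-f_2^2/f_1$ for $r=1$;
\item $-2f_2^5/f_1^4\equiv -2\frac{f_6^2}{f_3^2}\varphi(-q)$ for $r=2$.
\end{itemize}
Squares and triangular numbers are never $\equiv 2\pmod 3$, so these components vanish identically. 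Two dissections plus this lacunarity settle all nine cases, and the Sturm-bound fallback is unnecessary.

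\textbf{The shortcut fails.} The identity $F_{b+3}\equiv F_b\,f_{18}/f_9$ does not reduce the problem to $b\in\{0,1,2\}$. Multiplying by a series in $q^9$ mixes the classes $s$, $s-9$, $s-18 \pmod{27}$. Hence the coefficients of $F_{b+3}$ at $27n+21+b$ depend on those of $F_b$ in the classes $b+3$, $b+12$, $b+21\pmod{27}$, about which the case $b$ says nothing. For example, $F_0\equiv\sum\overline{p}(n)q^n$ and $\overline{p}(3)=8\not\equiv 0$. The $c$-dependence is instead absorbed in the second dissection, as shown above.

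\textbf{Comparison with the paper.} There this statement is the third congruence of Theorem \ref{thm:mainthm}. It comes from Lemma \ref{lem:Gkhauptrelation}, whose base case is itself the $3$-dissection of $\varphi(-q)$, followed by three dissections via Lemma \ref{lem:3diss}. The vanishing is just the missing $q^2$-term in $(1+qF(q^3))/(1+H(q^3))$. That argument is uniform in $\alpha$ with no case split, and it tracks the classes $27n+b$ and $27n+9+b$ at the same time. This is what yields Corollary \ref{cor:badintcong} and the inductive step of Corollary \ref{cor:infinffam}. Your route is more elementary and self-contained for the divisibility statement alone, but it gives no information about those other classes.
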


However, for $C=\pm 1$ these internal congruences remained rather mysterious because they could only be found for relatively small values of $\alpha,$ and even these few examples behaved differently from each other and needed to be handled somewhat separately. Computations suggest that all internal congruences of this form lie in the following three (finite) sets:
\begin{enumerate}[(A)]
\item{}\label{bullet1}  For $0\leq \alpha < 9$, $a_{3\alpha +2}(27n+\alpha) \equiv (-1)^\alpha a_{3\alpha +2}(3n) \pmod{3}$,
\item{}\label{bullet2}  For $0\leq \alpha < 9$, $a_{3\alpha +2}(27n+9+\alpha) \equiv (-1)^{\alpha + 1} a_{3\alpha +2}(3n+1) \pmod{3}$,
\item{}\label{bullet3} For $9\leq \alpha < 18$, $a_{3\alpha +2}(27n+\alpha) \equiv (-1)^{\alpha + 1} a_{3\alpha +2}(3n) \pmod{3}$.
\end{enumerate}

\bigskip 
\noindent
Some of these congruences (or congruences for subprogressions of the above progressions) have been already studied in \cite{HirschhornSellers2005,Sellers2025, ThejithaFathima2025, Wilson2026}. It is clear from the mathematical statements above that these three finite sets of congruences are somewhat related to one another.  However, they do not all follow a single pattern, and the proofs from previous authors were 
developed on a case--by--case basis.

The first main goal of this note is to uncover some of this mystery and to provide a unified treatment of the proof of these congruences. In order to do so, we define an auxiliary function
\[F(q) := \prod_{n=1}^\infty\frac{(1-q^{2n})^8}{(1-q^n)^8}\]
and offer the following main theorem for any nonnegative integer $\alpha.$

\begin{thm} \label{thm:mainthm}
Let $\alpha\geq 0$ and write $\alpha = 9a+b$ where $0\leq b<9.$ For all $n\geq 0$ we have 
\begin{align*}
    \sum_{n=0}^\infty a_{3\alpha+2}(27n + b)q^n &\equiv \frac{(-1)^b}{F(q)^a}\left(\sum_{n=0}^\infty a_{3\alpha+2}(3n)q^n\right)\pmod{3},\\
    \sum_{n=0}^\infty a_{3\alpha+2}(27n + 9 + b)q^n &\equiv \frac{(-1)^{b+1}}{F(q)^a}\left(\sum_{n=0}^\infty a_{3\alpha+2}(3n+1)q^n\right)\pmod{3}, \\
    \sum_{n=0}^\infty a_{3\alpha+2}(27n + 18 + b)q^n &\equiv 0\pmod{3}.
\end{align*}    
\end{thm}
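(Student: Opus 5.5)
The plan is to reduce everything to a finite computation with eta quotients modulo $3$, using the Frobenius congruence $f(q)^3\equiv f(q^3)\pmod 3$ for integer power series $f$. Write $f_j:=\prod_{n\ge1}(1-q^{jn})$. For $k=3\alpha+2$, \eqref{ak_genfn} becomes
\[
\sum_{n\ge0}a_{3\alpha+2}(n)q^n=\frac{f_2^{3\alpha+1}}{f_1^{3\alpha+2}}=\Big(\frac{f_2^3}{f_1^3}\Big)^{\alpha}\frac{f_2}{f_1^2}\equiv \Big(\frac{f_6}{f_3}\Big)^{\alpha}\frac{f_2}{f_1^2}\pmod 3 .
\]
The factor $(f_6/f_3)^\alpha$ is a series in $q^3$, so it passes through any $3$-dissection. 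Hence everything hinges on the $3$-dissection of $f_2/f_1^2=1/\varphi(-q)$, where $\varphi(q)=\sum_{n\in\mathbb Z} q^{n^2}$. I will use the classical identity $\varphi(-q)=\varphi(-q^9)-2q\,W(q^3)$, with $W(q)=f_1f_6^2/(f_2f_3)$, together with $1/\varphi(-q)\equiv\varphi(-q)^2/\varphi(-q^3)\pmod 3$. Since $-2\equiv 1\pmod 3$, squaring gives the three components explicitly:
\[
\frac{\varphi(-q^9)^2}{\varphi(-q^3)},\qquad \frac{2q\,\varphi(-q^9)W(q^3)}{\varphi(-q^3)},\qquad \frac{q^2W(q^3)^2}{\varphi(-q^3)} .
\]
Extracting residues $0$ and $1$ then gives closed eta-quotient forms, modulo $3$, for $\sum a_{3\alpha+2}(3n)q^n$ and $\sum a_{3\alpha+2}(3n+1)q^n$. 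Each has the shape $(f_2/f_1)^\alpha$ times an explicit quotient of $\varphi(-q^3)$, $\varphi(-q)$ and $W(q)$.

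Next I separate off $a$. Write $\alpha=9a+b$. Modulo $3$,
\[
\Big(\frac{f_6}{f_3}\Big)^{9a}=\Big(\frac{f_2}{f_1}\Big)^{27a}\equiv\Big(\frac{f_{54}}{f_{27}}\Big)^{a}.
\]
This is a series in $q^{27}$, so after extracting the progression $27n+r$ it becomes exactly $(f_2/f_1)^a$. In the same way, extracting $3n+r$ turns the factor $(f_6/f_3)^{9a}$ into $(f_2/f_1)^{9a}$. Comparing the two sides, the $a$-dependence of each claimed congruence is a factor $(f_2/f_1)^{a-9a}=(f_2/f_1)^{-8a}=F(q)^{-a}$. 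So Theorem~\ref{thm:mainthm} reduces to its case $a=0$: namely $0\le\alpha=b<9$, and these are exactly the finite lists (A), (B) and Theorem~\ref{thm:sellersdivprop}.

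For the remaining nine values of $b$, I will iterate the dissection twice more, starting from the residue-$0$ and residue-$1$ series obtained above. At each step I use Frobenius to push cubes into $q^3$. Then I write $(f_2/f_1)^b=(f_2/f_1)^{b_0}(f_2^3/f_1^3)^{b_1}\cdots$ with $b=b_0+3b_1$ in base $3$, so only the low-order digit has to be genuinely dissected at each stage. This needs the $3$-dissection of $f_2/f_1$, equivalently of $\psi(q)$-type products. I will use the standard one, $\psi(q)=A(q^3)+q\psi(q^9)$, together with the known dissection of $\varphi(-q)$ above. After three extractions the residue $27n+b$ (respectively $27n+9+b$, $27n+18+b$) picks out one product of dissection components. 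The claim is that this product equals $\pm$ the residue-$0$ (respectively residue-$1$) series, or $0$, modulo $3$.

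The main obstacle is this last step: controlling the cross terms when products of dissected pieces are reduced modulo $3$, and verifying the sign $(-1)^b$ or $(-1)^{b+1}$ uniformly in $b$. If the hand dissection becomes unwieldy, my fallback is to treat each of the $27$ resulting congruences, for $b<9$ and three residues, as an identity between eta quotients. After multiplying through by a suitable power of $f_1^{3}\equiv f_3$, each becomes an identity of holomorphic modular forms on some $\Gamma_0(N)$ with $N\mid 6\cdot 27$ (or $\Gamma_1(N)$ if characters intervene). I would then verify each identity modulo $3$ up to the Sturm bound by direct coefficient computation.
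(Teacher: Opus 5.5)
\textbf{There is a genuine gap: the nine base cases $0\le b<9$ are not proved.}

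Your reduction to $a=0$ is correct, and it is a nice way to see where $F(q)^{-a}$ comes from. Since $(f_2/f_1)^{27a}\equiv (f_{54}/f_{27})^a \pmod 3$, extracting $27n+r$ leaves $(f_2/f_1)^a$, extracting $3n+r$ leaves $(f_2/f_1)^{9a}$, and the quotient is $F(q)^{-a}$. The paper instead carries $a$ through every dissection.

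But the proposal stops exactly where the content lies:
\begin{itemize}
\item Only the third line is covered by Theorem~\ref{thm:sellersdivprop}. The lists (A) and (B) are recorded only as computational observations, so the nine cases $0\le b<9$ must be proved.
\item For these cases you only assert that the triple extraction picks out $\pm$ the residue-$0$/residue-$1$ series or $0$. That is the statement itself, and you leave the cross terms and the sign $(-1)^b$ as an open obstacle.
\item The tool you name does not obviously apply: $f_2/f_1=\psi(q)/f_2$ is not a $\psi$-type product.
\item For $b\equiv 2\pmod 3$ the chain $27n+b$ passes through the residue-$2$ component, not residue $0$ or $1$.
\item The Sturm-bound fallback could in principle certify the $27$ identities, but as written it is an unexecuted computation, not an argument.
\end{itemize}

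\textbf{The missing idea is one line from what you already have: reduce $W$ modulo $3$.} Since
\[
\frac{W(q)}{\varphi(-q^3)}=\frac{f_1f_6^3}{f_2f_3^3}\equiv \frac{f_2^8}{f_1^8}=F(q)\pmod 3,
\]
your identity becomes $\varphi(-q)\equiv\varphi(-q^9)\bigl(1+qF(q^3)\bigr)\equiv\varphi(-q^9)\bigl(1+H(q)\bigr)$. This is the base case of Lemma~\ref{lem:Gkhauptrelation}, and it makes the expansion self-similar. Your three components become $\frac{\varphi(-q^9)^2}{\varphi(-q^3)}$ times $1$, $-qF(q^3)$, $q^2F(q^3)^2$ respectively, so
\[
\sum_{n\ge0}a_{3b+2}(3n+r)q^n\equiv(-1)^r\Bigl(\frac{f_2}{f_1}\Bigr)^{b+8r}\frac{\varphi(-q^3)^2}{\varphi(-q)}\pmod 3 .
\]
Now follow, at each stage, only the residue equal to the current base-$3$ digit of $b$:
\begin{itemize}
\item The exponent of $f_2/f_1$ is then always divisible by $3$ (here $b+8r\equiv b-r$). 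So that factor passes through every dissection and you never need to dissect $f_2/f_1$ at all.
\item For $b=3c+r$, the signs accumulate to $(-1)^{r+c}=(-1)^b$ on $9n+b$.
\item At the third stage only $\varphi(-q)\equiv\varphi(-q^9)(1+qF(q^3))$ needs dissecting. Its missing $q^2$-term gives the vanishing on $27n+18+b$.
\item Finally, $\varphi(-q)^3\equiv\varphi(-q^3)$ identifies the other two components with $(-1)^b$ and $(-1)^{b+1}$ times the residue-$0$ and residue-$1$ series.
\end{itemize}
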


Several remarks are in order.  First, note that the last congruence in Theorem \ref{thm:mainthm} is Theorem \ref{thm:sellersdivprop}. Secondly, we note that all three sets of congruences \eqref{bullet1}--\eqref{bullet3} above follow from Theorem \ref{thm:mainthm} and its proof. For \eqref{bullet1} and \eqref{bullet2} this is immediate, since $a=0$ when $0\leq \alpha<9.$ For \eqref{bullet3} we note that $a=1$ when $9\leq \alpha<18$ so Theorem \ref{thm:mainthm} gives
\begin{align*}
\sum_{n=0}^\infty a_{3\alpha+2}(27n + \alpha)q^n &\equiv \frac{(-1)^{\alpha}}{F(q)}\left(\sum_{n=0}^\infty a_{3\alpha+2}(3n+1)q^n\right) \pmod{3}\\ &\equiv (-1)^{\alpha+1}\sum_{n=0}^\infty a_{3\alpha+2}(3n)q^n \pmod{3}
\end{align*}
where the last congruence follows by comparing \eqref{eqn:KU3abc} for $r=0$ and $r=1.$ Thus, this theorem gives a particularly uniform view of these congruences (and many more) which allows us to prove them all simultaneously by packaging them together in a convenient way.

The next goal of this note is to exploit Theorem \ref{thm:mainthm} in order to describe more infinite families similar to \eqref{eqn:a5}. The first step is to reframe our main theorem in a way that is analogous to  \eqref{jas_internal_result}, although with changing behavior in the subscript. For example, the first congruence of Theorem \ref{thm:mainthm} suggests that one should write $a_{3\alpha+2}(27n+b)$ in terms of a convolution of values of $a_{3\alpha+2}(3n)$ with the coefficients of $\displaystyle\frac{1}{F(q)^a}.$ However, it turns out that $F(q)$ pairs nicely with the generating function of $a_{3\alpha+2}(n)$ and allows us to write this convolution (modulo 3) more conveniently in terms of $a_{3\alpha+2-24a}(3n)$ instead. In particular, one can prove the following result.

\begin{cor}\label{cor:badintcong}
Let $\alpha\geq 0$ and write $\alpha = 9a+b$ where $0\leq b<9.$ For all $n\geq 0$ we have 
\begin{align}
a_{3\alpha+2}(27n+b) &\equiv (-1)^ba_{3\alpha+2-24a}(3n)\pmod{3}\label{cor_cong1}\\
a_{3\alpha+2}(27n+9+b) &\equiv (-1)^{b+1}a_{3\alpha+2-24a}(3n+1)\pmod{3}.\label{cor_cong2}
\end{align}
\end{cor}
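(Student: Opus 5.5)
Write $A_k(q):=\sum_{n\ge0}a_k(n)q^n=\prod_{n\ge1}(1-q^{2n})^{k-1}/(1-q^n)^{k}$. The plan is to deduce the corollary from Theorem \ref{thm:mainthm} by showing that dividing the $3$-dissection components of $A_{3\alpha+2}$ by $F(q)^a$ yields, modulo $3$, the corresponding components of $A_{3\alpha+2-24a}$. First observe how $A_k$ changes when $k$ drops by $24$:
\[
\frac{A_{k}(q)}{A_{k-24}(q)}=\prod_{n\ge1}\frac{(1-q^{2n})^{24}}{(1-q^n)^{24}}=F(q)^{3}\equiv F(q^3)\pmod 3,
\]
using $(1-x)^3\equiv 1-x^3 \pmod 3$ factor by factor. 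Iterating $a$ times gives
\[
A_{3\alpha+2-24a}(q)\equiv \frac{A_{3\alpha+2}(q)}{F(q^3)^{a}}\pmod 3 .
\]

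Next extract $3$-dissection components. Since $F(q^3)^{-a}$ is a power series in $q^3$ with integer coefficients (its constant term is $1$), multiplying by it commutes with extracting the terms $q^{3n+r}$. Extracting the exponents $\equiv r \pmod 3$ for $r=0,1$ and replacing $q^3$ by $q$ then gives
\[
\sum_{n\ge0}a_{3\alpha+2-24a}(3n+r)q^n\equiv \frac{1}{F(q)^a}\sum_{n\ge0}a_{3\alpha+2}(3n+r)q^n\pmod 3 .
\]
The right-hand side is exactly what appears, up to the signs $(-1)^b$ and $(-1)^{b+1}$, in the first two congruences of Theorem \ref{thm:mainthm}. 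Substituting and comparing coefficients of $q^n$ yields \eqref{cor_cong1} and \eqref{cor_cong2}.

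The one point to check is the meaning of the subscript $3\alpha+2-24a=3b+3a+2$, which is positive. The identity $A_k=A_{k-24}F^3$ is an identity of formal products, and the generating function \eqref{ak_genfn} makes sense for every integer $k$. So no sign or range issue arises, even though $a_k$ for these $k$ is simply the coefficient sequence of that product.

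There is no real obstacle here. The only step needing care is justifying that division by a series in $q^3$ commutes with the $3$-dissection modulo $3$. This holds because $F(q^3)^{-a}\in\mathbb{Z}[[q^3]]$, so the same argument works for each residue class.
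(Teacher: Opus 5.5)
Your proof is correct and follows essentially the same route as the paper: write $A_{3\alpha+2-24a}(q)=A_{3\alpha+2}(q)/F(q)^{3a}$, use $F(q)^3\equiv F(q^3)\pmod 3$, extract the residue classes $0$ and $1$ modulo $3$, and substitute the first two congruences of Theorem~\ref{thm:mainthm}. You are slightly more careful than the written proof, which does the reduction $F(q)^3\equiv F(q^3)$ only implicitly and states the dissection step as an equality where only a congruence modulo $3$ holds.
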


Finally, we use induction to prove the following infinite family of infinite families. This extends the infinite families previously found for $\alpha\in \{0,1,3,4,6,7\}$ in 
\cite{HirschhornSellers2005, Sellers2025,ThejithaFathima2025,Wilson2026}. This result is particularly compelling because the uniform approach to studying congruences used in this work allows us to obtain the first infinite family of infinite families of non-nested congruences modulo 3 for $a_k(n)$.

\begin{cor} \label{cor:infinffam}
Let $\alpha\geq 0$ and write $\alpha = 9a+b$ where $0\leq b<9.$ For all $m\geq 0$ and $n\geq 0$ we have
\[a_{3^{2m+3}a+3b+2}\left(3^{2m+3}n + \frac{3^{2m+2}(16+b)-b}{8}\right)\equiv 0\pmod{3}.\]
\end{cor}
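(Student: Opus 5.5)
The plan is to prove Corollary~\ref{cor:infinffam} by induction on $m$, with Theorem~\ref{thm:sellersdivprop} as the base case and Corollary~\ref{cor:badintcong} driving the inductive step. Throughout, write
\[k_m := 3^{2m+3}a+3b+2, \qquad N_m(n) := 3^{2m+3}n + \frac{3^{2m+2}(16+b)-b}{8}.\]
First I will check that $N_m(n)$ is an integer. Since $9\equiv 1 \pmod 8$, we have $3^{2m+2}\equiv 1\pmod 8$, so the numerator is congruent to $16+b-b=16\equiv 0 \pmod 8$.

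\emph{Base case $m=0$.} Here $k_0=27a+3b+2=3(9a+b)+2$, and
\[N_0(n)=27n+\frac{9(16+b)-b}{8}=27n+18+b.\]
So the claim is exactly Theorem~\ref{thm:sellersdivprop} applied with $\alpha=9a+b$.

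\emph{Inductive step.} Fix $m\ge 1$ and set $\alpha'=3^{2m+2}a+b$, so that $k_m=3\alpha'+2$. Since $0\le b<9$, the decomposition of $\alpha'$ is $\alpha'=9a'+b$ with $a'=3^{2m}a$ and the same $b$. The subscript shift in Corollary~\ref{cor:badintcong} then gives
\[k_m-24a' = 3^{2m}a(27-24)+3b+2 = 3^{2m+1}a+3b+2 = k_{m-1},\]
which is exactly the previous index. On the argument side, a direct computation gives
\[\frac{N_m(n)-b}{9}=3^{2m+1}n+\frac{3^{2m}(16+b)-b}{8}=N_{m-1}(n).\]
Hence $N_m(n)=9N_{m-1}(n)+b$, and everything depends on $N_{m-1}(n)$ modulo $3$.

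Because $m\ge1$, the term $3^{2m}(16+b)$ is divisible by $9$. Together with $8\equiv -1\pmod 3$, this gives $N_{m-1}(n)\equiv b \pmod 3$. Write $N_{m-1}(n)=3n'+j$ with $j\equiv b\pmod 3$ and $j\in\{0,1,2\}$; then $N_m(n)=27n'+9j+b$. I split into three cases according to $j$.

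\begin{itemize}
\item If $j=2$ (that is, $b\equiv 2\pmod 3$), then $N_m(n)=27n'+18+b$, and Theorem~\ref{thm:sellersdivprop} applied to $\alpha'$ gives $a_{k_m}(N_m(n))\equiv 0\pmod 3$ directly.
\item If $j=0$, then \eqref{cor_cong1} applied to $\alpha'$ gives
\[a_{k_m}(N_m(n))\equiv \pm a_{k_{m-1}}(3n')=\pm a_{k_{m-1}}(N_{m-1}(n)) \pmod 3.\]
\item If $j=1$, then \eqref{cor_cong2} applied to $\alpha'$ gives
\[a_{k_m}(N_m(n))\equiv \pm a_{k_{m-1}}(3n'+1)=\pm a_{k_{m-1}}(N_{m-1}(n)) \pmod 3.\]
\end{itemize}

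In the last two cases the right-hand side vanishes modulo $3$ by the induction hypothesis, which completes the induction.

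The only delicate point is this bookkeeping. One must confirm that $\alpha'$ has the same residue $b$ modulo $9$, and that the index shift $-24a'$ lands exactly on $k_{m-1}$; the identity $27-24=3$ is what makes the family close up. One must also handle the residue class $j=2$, which Corollary~\ref{cor:badintcong} does not cover. That case is absorbed by Theorem~\ref{thm:sellersdivprop}, so no induction is needed there.
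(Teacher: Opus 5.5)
Your proof is correct and follows essentially the paper's argument: induction on $m$ with Theorem~\ref{thm:sellersdivprop} as the base case, \eqref{cor_cong1} and \eqref{cor_cong2} driving the step, and a case split on $b \bmod 3$. The only cosmetic difference is in the case $b\equiv 2\pmod 3$: you absorb it into the inductive step via $N_m=9N_{m-1}+b$, whereas the paper handles it directly for every $m$ by checking that the argument is $\equiv 18+b\pmod{27}$.
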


In Section \ref{sec:preliminaries} we collect the necessary results in order to prove Theorem \ref{thm:mainthm}. In Sections \ref{sec:proofofmainthm} and \ref{sec:proofofcors} we prove Theorem \ref{thm:mainthm} and its corollaries.

\section{Preliminaries} \label{sec:preliminaries}

In this section, we define some notation and give a few initial tools that will be helpful in proving Theorem \ref{thm:mainthm}. For a positive integer $j$ we define the $q$-series
\[f_j  := \prod_{n=1}^\infty (1-q^{jn}).\]

Let $k=3\alpha+2$ be a nonnegative integer and define
\begin{align*}
\phi_k(q) &:=  \frac{f_2^{k-1}}{f_1^k}=\sum_{n=0}^\infty a_k(n)q^n,\\
g_k(q) &:= q^\alpha\frac{\phi_k(q^9)}{\phi_k(q)}. 
\end{align*}

In order to understand the generating function \eqref{ak_genfn} better, we define the auxiliary functions
\begin{align*}
F(q) &:= \frac{f_2^8}{f_1^8} = 1 + 8q + 36q^2 + \cdots\\
H(q) &:= q\frac{f_2^{24}}{f_1^{24}} = q + 24q^2 + 300q^3 + \cdots
\end{align*}
and note that
\[H(q) = qF(q)^3 \equiv qF(q^3) \pmod{3}\]
thanks to the definition of $f_j$ and divisibility properties of binomial coefficients.

We next take a significant step forward in proving Theorem \ref{thm:mainthm} by noting the following.
\begin{lem} \label{lem:Gkhauptrelation}
For every $\alpha\geq 0$, we have \[g_{3\alpha+2}(q) \equiv (1+H(q))H(q)^{\alpha} \pmod{3}.\]
Equivalently, we have 
\[\sum_{n=0}^\infty a_{3\alpha+2}(n)q^n \equiv \left(\sum_{n=0}^\infty a_{3\alpha+2}(n)q^{9n}\right) \frac{1}{F(q^3)^\alpha} \frac{1}{1+H(q)}\pmod{3}.\]
\end{lem}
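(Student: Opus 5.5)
The plan is to reduce everything to products of $f_j$ and use the basic Frobenius-type congruence $f_j^3\equiv f_{3j}\pmod 3$ (the same fact already used to get $H(q)\equiv qF(q^3)$). Write $k=3\alpha+2$, so that
\[\phi_k=\frac{f_2^{3\alpha+1}}{f_1^{3\alpha+2}}\equiv \frac{f_6^{\alpha}}{f_3^{\alpha}}\cdot\frac{f_2}{f_1^2}\pmod 3,\qquad \phi_k(q^9)\equiv \frac{f_{54}^{\alpha}}{f_{27}^{\alpha}}\cdot\frac{f_{18}}{f_9^2}\pmod 3 .\]
Taking the quotient, $g_k(q)$ splits into an $\alpha$-dependent factor
\[q^\alpha\Bigl(\frac{f_{54}f_3}{f_{27}f_6}\Bigr)^{\alpha},\]
and an $\alpha$-independent factor
\[\frac{f_{18}f_1^2}{f_9^2f_2}.\]

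For the first factor, I use $f_{54}\equiv f_6^9$ and $f_{27}\equiv f_3^9$. This gives $f_{54}f_3/(f_{27}f_6)\equiv f_6^8/f_3^8=F(q^3)$. Hence the first factor is $\equiv (qF(q^3))^\alpha\equiv H(q)^\alpha$.

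For the second factor, I use $f_{18}\equiv f_6^3\equiv f_2^9$ and $f_9^2\equiv f_3^6\equiv f_1^{18}$. This gives $f_{18}f_1^2/(f_9^2f_2)\equiv f_2^8/f_1^{16}$. So the lemma is reduced to the single $\alpha$-free congruence
\[\frac{f_2^8}{f_1^{16}}\equiv 1+q\frac{f_2^{24}}{f_1^{24}}\pmod 3.\]
Multiplying through by $q f_1^{24}$, this is equivalent to
\[\eta(\tau)^8\eta(2\tau)^8\equiv \eta(\tau)^{24}+\eta(2\tau)^{24}\pmod 3,\]
where the $q$-powers match: $q^{(8+16)/24}=q$ on the left and $q^1,q^2$ on the right. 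The "equivalently" form of the lemma then follows by simply rearranging $g_k$ and using $q^\alpha H(q)^{-\alpha}\equiv F(q^3)^{-\alpha}$.

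The main obstacle is this last identity, since the two sides have different weights (8 versus 12). I would handle it by multiplying the left side by $E_4(\tau)=1+240\sum\sigma_3(n)q^n$, which satisfies $E_4\equiv 1\pmod 3$ because $3\mid 240$. Then $E_4\,\eta(\tau)^8\eta(2\tau)^8$ and $\eta(\tau)^{24}+\eta(2\tau)^{24}$ are both holomorphic weight-$12$ modular forms on $\Gamma_0(2)$ with integer coefficients; the eta quotients have trivial character, as the standard Gordon–Hughes–Newman criteria confirm. Sturm's bound for weight $12$ on $\Gamma_0(2)$ is $\frac{12}{12}[\mathrm{SL}_2(\mathbb Z):\Gamma_0(2)]=3$. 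It therefore suffices to check that the coefficients of $q^0,\dots,q^3$ agree modulo $3$, which is a short direct computation. If a cleaner route is desired, I would instead try to derive the identity from Jacobi's $\theta_3^4=\theta_4^4+\theta_2^4$ written in eta-products, but the Sturm-bound check is the route I would commit to.
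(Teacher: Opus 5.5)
Your proof is correct. It differs from the paper's mainly in the base case.

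\textbf{The $\alpha$-dependent part.} The paper inducts on $\alpha$, using $g_{3\alpha+5}=q\frac{f_1^3f_{18}^3}{f_2^3f_9^3}\,g_{3\alpha+2}\equiv H\,g_{3\alpha+2}\pmod 3$. That is the same computation as your reduction of the $\alpha$-dependent factor to $H(q)^\alpha$. Your non-inductive factorization is therefore a repackaging of the paper's inductive step, not a new idea.

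\textbf{The base case $\alpha=0$.} The paper keeps $f_9$ and $f_{18}$ and quotes the classical 3-dissection
$\varphi(-q)=\frac{f_1^2}{f_2}=\frac{f_9^2}{f_{18}}-2q\frac{f_3f_{18}^2}{f_6f_9}$.
This gives the exact formula $g_2=1-2q\frac{f_3f_{18}^3}{f_6f_9^3}$, and only then reduces modulo $3$. Everything stays within elementary $q$-series, consistent with the paper's remark that modular forms are not needed.

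You apply Frobenius first, collapsing $g_2$ to $f_2^8/f_1^{16}$. You then prove $f_2^8/f_1^{16}\equiv 1+H\pmod 3$ using $E_4\equiv 1$ and Sturm's bound on $M_{12}(\Gamma_0(2))$. This trades a quoted dissection identity for a finite check, at the cost of the eta-quotient criteria and Sturm's theorem.

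\textbf{Your finite check.} The check you left undone does go through. Through $q^3$, both $\eta(\tau)^8\eta(2\tau)^8=q-8q^2+12q^3+\cdots$ and $\eta(\tau)^{24}+\eta(2\tau)^{24}=q-23q^2+252q^3+\cdots$ reduce to $q+q^2$ modulo $3$.

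\textbf{An exact identity behind it.} Your congruence comes from the exact classical identity $E_4(\tau)\,\eta(\tau)^8\eta(2\tau)^8=\eta(\tau)^{24}+256\,\eta(2\tau)^{24}$. Equivalently, $E_4=\eta(\tau)^{16}/\eta(2\tau)^8+256\,\eta(2\tau)^{16}/\eta(\tau)^8$. Your congruence follows immediately because $3\mid 240$ and $256\equiv 1\pmod 3$. Citing this identity would replace the Sturm-bound computation by a single classical identity, which is closer in spirit to the paper's use of the $\varphi(-q)$ dissection.
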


\begin{rem*}
Although we do not need the theory of modular forms to obtain any of the results here, this work is heavily inspired by the theory of modular forms, and in particular the landmark work of Newman \cite{Newman1962}. It turns out that $g_k(q)$ is a modular function on $\Gamma_0(2)$ and that $H(q)$ is a Hauptmodul. Thus we expect that $g_k(q)$ can be expressed as a rational function in $H(q)$, and Lemma \ref{lem:Gkhauptrelation} gives exactly this kind of expression (as a congruence, which simplifies the formula and is sufficient for our needs).
\end{rem*}

\begin{proof}
We prove the first statement of the lemma by induction on $\alpha.$ For the base case $\alpha=0$ we wish to show that
\[\frac{f_1^2f_{18}}{f_2f_9^{2}} \equiv 1 + q\frac{f_2^{24}}{f_1^{24}} \pmod{3}.\]

To that end, we recall one of Ramanujan's well-known theta functions 
$$ \varphi(q) := 1 + 2\sum_{n=1}^\infty q^{n^2},$$ which satisfies \[\frac{f_1^2}{f_2} = \varphi(-q) = \frac{f_9^2}{f_{18}} - 2q\frac{f_3f_{18}^2}{f_6f_9}\] (see \cite[(1.5.8) and (14.3.2)]{Hirschhorn2017}). This means that
\begin{align*}
\frac{f_1^2f_{18}}{f_2f_9^2}&= 1 - 2q\frac{f_3f_{18}^3}{f_6f_9^3}\\
&\equiv  1 + q\frac{f_1^3f_{2}^{27}}{f_2^3f_1^{27}}\pmod{3}\\
&= 1 + q\frac{f_{2}^{24}}{f_1^{24}}
\end{align*}
as desired.

To obtain the inductive step, we simply note that
\[g_{3\alpha+5}(q) = q\frac{f_1^3f_{18}^3}{f_2^3f_9^3}g_{3\alpha+2}(q)\equiv H(q)g_{3\alpha+2}(q) \pmod{3}.\]

This completes the proof of the first congruence; the second congruence follows immediately from the first using the descriptions of $g_k(q), \phi_k(q),$ and $F(q)$ given above.
\end{proof}

Finally, we collect here some facts that will be helpful for our dissections later.

\begin{lem} \label{lem:3diss}
We have the following congruences (which are 3--dissections mod 3).
\begin{enumerate}[(a)]
\item $\displaystyle \frac{1}{1+H(q)} \equiv \frac{1}{1+H(q^3)}\left(1 - qF(q^3) + q^2F(q^3)^2 \right) \pmod{3},$
\item $\displaystyle \frac{1}{(1+H(q))^2} \equiv \frac{1}{1+H(q^3)}\left(1 + qF(q^3)\right) \pmod{3}.$
\end{enumerate}
\end{lem}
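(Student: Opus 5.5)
The plan is to use the Frobenius congruence $P(q)^3 \equiv P(q^3) \pmod{3}$, valid for any power series $P$ with integer coefficients. It holds because the cross terms in a multinomial expansion of a cube carry coefficients divisible by $3$, and $c^3 \equiv c \pmod 3$ for integers $c$. Since $H(q)$ has integer coefficients and $1+H(q)$ has constant term $1$, both $1+H(q)$ and $1+H(q^3)$ are invertible in $\mathbb{Z}[[q]]$. So every manipulation below takes place in $\mathbb{Z}[[q]]$ and is then reduced modulo $3$.

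First I will record that $(1+H(q))^3 \equiv 1 + H(q)^3 \equiv 1 + H(q^3) \pmod{3}$. This uses the binomial coefficients $\binom{3}{1}=\binom{3}{2}=3$ and then Frobenius applied to $H$. It follows that
\[\frac{1}{1+H(q)} = \frac{(1+H(q))^2}{(1+H(q))^3} \equiv \frac{(1+H(q))^2}{1+H(q^3)} \pmod{3},\]
\[\frac{1}{(1+H(q))^2} = \frac{1+H(q)}{(1+H(q))^3} \equiv \frac{1+H(q)}{1+H(q^3)} \pmod{3}.\]
Here I use that if $A \equiv B \pmod 3$ with both having constant term $1$, then $A^{-1} \equiv B^{-1} \pmod 3$.

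Next I will substitute the congruence $H(q) \equiv qF(q^3) \pmod{3}$ noted just before Lemma~\ref{lem:Gkhauptrelation}. For (b) this immediately gives the numerator $1 + qF(q^3)$. For (a) the numerator becomes
\[(1+qF(q^3))^2 = 1 + 2qF(q^3) + q^2F(q^3)^2 \equiv 1 - qF(q^3) + q^2F(q^3)^2 \pmod{3}.\]
This is exactly the claimed form.

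Finally, the right-hand sides are genuine $3$-dissections. The factors $1/(1+H(q^3))$ and $F(q^3)$ are series in $q^3$, so the three terms split according to residues of exponents modulo $3$. I do not expect any real obstacle. The only point needing care is justifying that congruences modulo $3$ may be inverted and multiplied in $\mathbb{Z}[[q]]$, which is standard since $3\mathbb{Z}[[q]]$ is an ideal and the series involved are units.
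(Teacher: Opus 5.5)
Your proof is correct and is essentially the paper's argument. The paper multiplies by $1-H+H^2$ to get the exact denominator $1+H^3$; this is the same as your $(1+H)^2$ modulo $3$, followed by the same substitutions $H(q)^3\equiv H(q^3)$ and $H(q)\equiv qF(q^3)$. The only real difference is part (b): the paper squares (a) and cancels a factor $1+q^3F(q^3)^3=1+H(q^3)$, whereas you get (b) directly from $(1+H)/(1+H)^3$, which is slightly cleaner.
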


\begin{proof}
The first congruence comes from 
\begin{align*}
\frac{1}{1+H(q)} 
&= \frac{1-H(q)+H(q)^2}{1+H(q)^3} \\
& \equiv \frac{1}{1+H(q^3)}\left(1 - qF(q^3) + q^2F(q^3)^2 \right) \pmod{3}.
\end{align*}
Squaring this and reducing mod 3, we have
\begin{align*}
\frac{1}{(1+H(q))^2} 
&\equiv \frac{1}{(1+H(q^3))^2}\left(1 + qF(q^3)\right)\left(1 + q^3F(q^3)^3 \right)\\
&\equiv \frac{1}{1+H(q^3)}\left(1 + qF(q^3)\right) \pmod{3}.
\end{align*}
\end{proof}

\section{Proof of Theorem \ref{thm:mainthm}} \label{sec:proofofmainthm}

We now have all of the tools necessary to prove Theorem \ref{thm:mainthm}.
\begin{proof}[Proof of Theorem \ref{thm:mainthm}]
By Lemma \ref{lem:Gkhauptrelation}, we have
\[\sum_{n=0}^\infty a_k(n)q^n \equiv \left(\sum_{n=0}^\infty a_k(n)q^{9n}\right) \frac{1}{F(q^3)^\alpha} \frac{1}{1+H(q)}\pmod{3}.\]
By Lemma \ref{lem:3diss}, we compute the full 3--dissection (mod 3) to be
\begin{align*}
\sum_{n=0}^\infty a_k(3n)q^n &\equiv \left( \sum_{n=0}^\infty a_k(n)q^{3n} \right) \frac{1}{F(q)^{\alpha}}\frac{1}{1+H(q)} \pmod{3},\\
\sum_{n=0}^\infty a_k(3n+1)q^n &\equiv \left( \sum_{n=0}^\infty a_k(n)q^{3n} \right) \frac{1}{F(q)^{\alpha}}\frac{-F(q)}{1+H(q)} \pmod{3},\\ 
\sum_{n=0}^\infty a_k(3n+2)q^n &\equiv \left( \sum_{n=0}^\infty a_k(n)q^{3n} \right) \frac{1}{F(q)^{\alpha}}\frac{F(q)^2}{1+H(q)}\pmod{3}.
\end{align*}
Note that we can restate this as follows: for $0\leq r<3$ we have
\begin{equation} \label{eqn:KU3abc}
\sum_{n=0}^\infty a_k(3n+r)q^n \equiv \left(\sum_{n=0}^\infty a_k(n)q^{3n}\right) \frac{(-1)^r}{F(q)^{\alpha-r}} \frac{1}{1+H(q)}\pmod{3}.
\end{equation}
Then since $\alpha = 9a+b$ and $0\leq b<9$ we may write $b=3c+r$ where $0\leq c,r<3.$ Next we 3--dissect another time (using part (a) of Lemma \ref{lem:3diss}) to find 
\begin{align*}
\sum_{n=0}^\infty a_k(3(3n+c)+r)q^n &\equiv \left(\sum_{n=0}^\infty a_k(n)q^n \right) \frac{(-1)^r}{F(q)^{3a+c}}\frac{(-1)^cF(q)^c}{1+H(q)} \pmod{3} 
\end{align*}
which implies 
\begin{align*}
\sum_{n=0}^\infty a_k(9n+b)q^n &\equiv \left(\sum_{n=0}^\infty a_k(n)q^n \right) \frac{(-1)^b}{F(q^3)^{a}}\frac{1}{1+H(q)} \pmod{3}.
\end{align*}
By Lemma \ref{lem:Gkhauptrelation} we have
\[\sum_{n=0}^\infty a_k(9n+b)q^n \equiv \left(\sum_{n=0}^\infty a_k(n)q^{9n} \right) \frac{(-1)^b}{F(q^3)^{\alpha + a}}\frac{1}{(1+H(q))^2} \pmod{3}.\]
Lastly, we fully 3--dissect once more (using part (b) of Lemma \ref{lem:3diss}) and apply \eqref{eqn:KU3abc} to get 
\begin{align*}
\sum_{n=0}^\infty a_k(27n+b)q^n &\equiv \left(\sum_{n=0}^\infty a_k(n)q^{3n} \right) \frac{(-1)^b}{F(q)^{\alpha+a}}\frac{1}{1+H(q)}\\
&\equiv \frac{(-1)^b}{F(q)^{a}} \left(\sum_{n=0}^\infty a_k(3n)q^{n} \right) \pmod{3},\\
\sum_{n=0}^\infty a_k(27n+9+b)q^n &\equiv \left(\sum_{n=0}^\infty a_k(n)q^{3n} \right) \frac{(-1)^b}{F(q)^{\alpha+a}}\frac{F(q)}{1+H(q)}\\
&\equiv \frac{(-1)^{b+1}}{F(q)^a} \left(\sum_{n=0}^\infty a_k(3n+1)q^{n} \right) \pmod{3},\\
\sum_{n=0}^\infty a_k(27n+18+b)q^n &\equiv 0 \pmod{3}.
\end{align*}
This completes the proof.
\end{proof}

\section{Proofs of corollaries} \label{sec:proofofcors}

In this section, we prove Corollary \ref{cor:badintcong} and Corollary \ref{cor:infinffam}.
\begin{proof}[Proof of Corollary \ref{cor:badintcong}]
First we note that
\[\sum_{n=0}^\infty a_{k-24a}(n)q^n = \frac{f_2^{k-1-24a}}{f_1^{k-24}} = \frac{f_1^{24a}}{f_2^{24a}}\frac{f_2^{k-1}}{f_2^k} = \frac{1}{F(q)^{3a}} \sum_{n=0}^\infty a_{k}(n)q^n.\]
Computing a 3--dissection  and applying Theorem \ref{thm:mainthm} then gives
\begin{align*}
\sum_{n=0}^\infty a_{k-24a}(3n)q^n 
&= \frac{1}{F(q)^{a}} \sum_{n=0}^\infty a_{k}(3n)q^n \\
&\equiv (-1)^b\sum_{n=0}^\infty a_k(27n+b)q^n \pmod{3}, \text{\ \ and} \\
\sum_{n=0}^\infty a_{k-24a}(3n+1)q^n 
&= \frac{1}{F(q)^{a}} \sum_{n=0}^\infty a_{k}(3n+1)q^n \\
&\equiv (-1)^{b+1}\sum_{n=0}^\infty a_k(27n+9+b)q^n \pmod{3}
\end{align*}
as desired.
\end{proof}

\begin{proof}[Proof of Corollary \ref{cor:infinffam}]
We consider three cases, depending on the residue of $b$ modulo 3.  

For $b\equiv 0\pmod{3},$ the result follows by induction on $m$ using Theorem \ref{thm:sellersdivprop} as the base case and Corollary \ref{cor:badintcong} to deduce the inductive step.  In particular, for $m=0$ we have 
\[a_{27a+3b+2}\left(27n + \frac{9(16+b)-b}{8}\right) = a_{3\alpha+2}(27n+18+b)  \equiv 0\pmod{3}\] by Theorem \ref{thm:sellersdivprop}.
For the inductive step, suppose that
\[a_{3^{2m+3}a+3b+2}\left(3^{2m+3}n + \frac{3^{2m+2}(16+b)-b}{8}\right)\equiv 0\pmod{3}\]
for a fixed $m\geq 0.$ Since $\frac{3^{2m+2}(16+b)-b}{8} \equiv b \equiv 0 \pmod{3}$, we use \eqref{cor_cong1} 
to deduce that 
\begin{align*}
0 &\equiv a_{3^{2m+3}a+3b+2}\left(3^{2m+3}n + \frac{3^{2m+2}(16+b)-b}{8}\right) \\
&= a_{3^{2m+5
}a +3b+2- 24\cdot 3^{2m+2}a}\left(3^{2m+3}n + \frac{3^{2m+2}(16+b)-b}{8}\right)\\
&\equiv \pm a_{3^{2m+5}a+3b+2}\left(3^{2m+5}n + 9\frac{3^{2m+2}(16+b)-b}{8} + b\right) \qquad \qquad \text{(by \eqref{cor_cong1})}\\
&\equiv \pm a_{3^{2m+5}a+3b+2}\left(3^{2m+5}n + \frac{3^{2m+4}(16+b)-b}{8}\right) \pmod{3}
\end{align*}
as desired.

For $b\equiv 1\pmod{3}$, the proof is analogous to the proof above; the base case again follows from Theorem \ref{thm:sellersdivprop}, while the inductive step now follows from \eqref{cor_cong2}.

The case $b\equiv 2\pmod{3}$ follows from Theorem \ref{thm:sellersdivprop} since,  for all $m\geq 0$,
$$\frac{3^{2m+2}(16+b)-b}{8}\equiv 10b \equiv 18+b \pmod{27}.$$ 

\end{proof}
\section*{Declarations}

\begin{itemize}

\item{} Author contributions: All authors contribute equally to this work. 
\item{} Funding: The authors have no funding to declare.  
\item{} Data availability: No datasets were generated or analyzed during the current study.
\item{} Conflict of interest: The authors declare no conflict of interest.
    
\end{itemize}

\bibliographystyle{alpha} 
\bibliography{refs}

@book {Hirschhorn2017,
    AUTHOR = {Hirschhorn, Michael D.},
     TITLE = {The power of {$q$}: a personal journey},
    SERIES = {Developments in Mathematics},
    VOLUME = {49},
 PUBLISHER = {Springer, Cham},
      YEAR = {2017},
     PAGES = {xxii+415},
      ISBN = {978-3-319-57761-6; 978-3-319-57762-3},
   MRCLASS = {05A17 (05A19 05A30 11B65 33D15)},
  MRNUMBER = {3699428},
       DOI = {10.1007/978-3-319-57762-3},
       URL = {https://doi.org/10.1007/978-3-319-57762-3},
}

@article {HirschhornSellers2005,
    AUTHOR = {Hirschhorn, Michael D. and Sellers, James A.},
     TITLE = {An infinite family of overpartition congruences modulo 12},
   JOURNAL = {Integers},
  FJOURNAL = {Integers. Electronic Journal of Combinatorial Number Theory},
    VOLUME = {5},
      YEAR = {2005},
    NUMBER = {1},
     PAGES = {A20, 4},
      ISSN = {1553-1732},
   MRCLASS = {11P83},
  MRNUMBER = {2192239},
}

@article {HirschhornSellers2025,
    AUTHOR = {Hirschhorn, Michael D. and Sellers, James A.},
     TITLE = {A family of congruences modulo 7 for partitions with
              monochromatic even parts and multi-colored odd parts},
   JOURNAL = {Australas. J. Combin.},
  FJOURNAL = {The Australasian Journal of Combinatorics},
    VOLUME = {95},
      YEAR = {2026},
     PAGES = {435--442},
      ISSN = {1034-4942,2202-3518},
   MRCLASS = {05A17},
  MRNUMBER = {5095078},
}

@article {Newman1962,
    AUTHOR = {Newman, Morris},
     TITLE = {Modular forms whose coefficients possess multiplicative
              properties. {II}},
   JOURNAL = {Ann. of Math. (2)},
  FJOURNAL = {Annals of Mathematics. Second Series},
    VOLUME = {75},
      YEAR = {1962},
     PAGES = {242--250},
      ISSN = {0003-486X},
   MRCLASS = {10.20},
  MRNUMBER = {146150},
MRREVIEWER = {R.\ A.\ Rankin},
       DOI = {10.2307/1970172},
       URL = {https://doi.org/10.2307/1970172},
}

@article {Sellers2025,
    AUTHOR = {Sellers, James A.},
     TITLE = {Elementary proofs and generalizations of recent congruences of
              {T}hejitha and {F}athima},
   JOURNAL = {Rev. R. Acad. Cienc. Exactas F\'is. Nat. Ser. A Mat. RACSAM},
  FJOURNAL = {Revista de la Real Academia de Ciencias Exactas, F\'isicas y
              Naturales. Serie A. Matematicas. RACSAM},
    VOLUME = {120},
      YEAR = {2026},
    NUMBER = {4},
     PAGES = {Paper No. 119},
      ISSN = {1578-7303,1579-1505},
   MRCLASS = {05A17},
  MRNUMBER = {5122756},
       DOI = {10.1007/s13398-026-01915-4},
       URL = {https://doi-org.utk.idm.oclc.org/10.1007/s13398-026-01915-4},
}

@article {ThejithaFathima2025,
    AUTHOR = {Thejitha, M. P. and Fathima, S. N.},
     TITLE = {Arithmetic properties of partitions with 1-colored even parts
              and {$r$}-colored odd parts},
   JOURNAL = {Bol. Soc. Mat. Mex. (3)},
  FJOURNAL = {Bolet\'in de la Sociedad Matem\'atica Mexicana. Third Series},
    VOLUME = {32},
      YEAR = {2026},
    NUMBER = {3},
     PAGES = {Paper No. 113},
      ISSN = {1405-213X,2296-4495},
   MRCLASS = {11P83 (05A17 11F33)},
  MRNUMBER = {5120907},
       DOI = {10.1007/s40590-026-00944-8},
       URL = {https://doi-org.utk.idm.oclc.org/10.1007/s40590-026-00944-8},
}

@misc{Wilson2026,
      title={Families of Congruences for Partitions with $k$-colored odd parts}, 
      author={Samuel Wilson},
      year={2026},
      eprint={2603.19491},
      archivePrefix={arXiv},
      primaryClass={math.NT},
      url={https://arxiv.org/abs/2603.19491},
      note="Preprint at \url{https://arxiv.org/abs/2603.19491}"
}
\end{document}